\newtheorem{theo}{Theorem}[section]
\newtheorem{remark}[theo]{Remark}
\newtheorem{lemma}[theo]{Lemma}
\newtheorem{definition}[theo]{Definition}
\newtheorem{proposition}[theo]{Proposition}
           \DeclareMathOperator{\Sym}{Sym}
\DeclareMathOperator{\Alt}{Alt}         \DeclareMathOperator{\Aut}{Aut}
\DeclareMathOperator{\PSL}{PSL}           \DeclareMathOperator{\GL}{GL}
\DeclareMathOperator{\AGL}{AGL}
\renewcommand{\phi}[0]{\varphi}
\begin{document}
  \title[Translation based  ciphers over arbitrary  finite fields]{On
    the   group  generated   by 
    the  round   functions  of
    translation based  ciphers over arbitrary  finite fields}
\author{R.~Aragona}          

\address[Aragona]{Department  of  Mathematics,  University of  Trento,  Via
  Sommarive 14, 38123 Trento, Italy}
\email{ric.aragona@gmail.com}

\author{A.~Caranti}
    \address[Caranti]{Department  of
  Mathematics,  University of  Trento,  Via 
  Sommarive 14, 38123 Trento, Italy}
\email{andrea.caranti@unitn.it} 
 \author{F.~Dalla     Volta}
 \address[Dalla Volta]{Department of
  Mathematics  and  Applications,  University of  Milano-Bicocca,  Via
  R. Cozzi, 53, 20126 Milano, Italy}
   \email{francesca.dallavolta@unimib.it}

\author{M.~Sala} \address[Sala]{Department  of  Mathematics,  University of  Trento,  Via
  Sommarive 14, 38123 Trento, Italy}
\email{maxsalacodes@gmail.com} 

\date{22 July 2013, Version 3.14}

\thanks{The second and third authors are members of INdAM-GNSAGA, Italy.}
\thanks{The  first, second and  fourth authors  thankfully acknowledge
  support  by the  Department of  Mathematics, University  of Trento.}
\thanks{The  third  author  thankfully  acknowledges  support  by  the
  Department   of   Mathematics   and  Applications,   University   of
  Milano-Bicocca   and   MIUR-Italy  via   PRIN   `Group  theory   and
  applications''.}

\begin{abstract} 
We define a  translation based cipher over an  arbitrary finite field,
and study  the permutation group  generated by the round  functions of
such a  cipher. We show  that under certain  cryptographic assumptions
this  group  is primitive.  Moreover,  a  minor  strengthening of  our
assumptions allows us  to prove that such a group  is the symmetric or
the alternating  group; this improves  upon a previous result  for the
case of characteristic two.
\end{abstract}

\keywords{Cryptosystems, Groups generated  by round functions, Primitive groups,
O'Nan-Scott, Wreath products, Affine groups}

\maketitle

\section{Introduction}
Translation based ciphers, as defined by Caranti, Dalla Volta and Sala
in~\cite{CGC-cry-art-carantisalaImp}, form  a class of  iterated block
ciphers, i.e.\  obtained by  the composition of  several key-dependent
permutations    of   the    message/cipher   space    called   ``round
functions''. This  class of  ciphers contains well-known  ciphers like
AES~\cite{CGC-cry-art-deamenrijmen1}                                and
SERPENT~\cite{CGC-cry-art-serpent}.

In                 1975                 Coppersmith                and
Grossman~\cite{CGC-cry-art-copgro1975generators}    investigated   the
permutation group generated by the round functions of a cipher, aiming
at finding  properties of  the group which  imply weaknesses  of the
cipher.        In       this        direction,        Kaliski       et
al.~\cite{CGC-cry-art-kalburrivshe1988data} proved  that if this group
is too small,  then the cipher is vulnerable  to certain cryptanalytic
attacks.  Paterson~\cite{CGC-cry-art-paterson1}  showed  that if  this
group is imprimitive,  then it is possible to embed  a trapdoor in the
cipher.

In~\cite{CGC-cry-art-carantisalaImp} the authors provided  conditions   on the
S-Boxes  of a  translation based  cipher  which ensure  that the  group
generated   by   its   round   functions   is   primitive.    Moreover
in~\cite{CGC-cry-art-cardalsal2009application},  using the O'Nan-Scott
classification  of primitive  groups, it  was  proved that  if such  a
cipher satisfies  some additional cryptographic  assumptions, then the
group is the alternating or the symmetric  group.

In       this       paper       we      extend       the       results
of~\cite{CGC-cry-art-carantisalaImp}
and~\cite{CGC-cry-art-cardalsal2009application}  to  translation based
ciphers defined over an arbitrary  finite field. The main point is the
move from  vector spaces defined over the  field $\mathbb{F}_{2}$ with
two elements, to vector  spaces defined over a field $\mathbb{F}_{p}$,
where $p$  is an arbitrary prime.  The extension to  vector spaces $V$
over an arbitrary finite field  $\mathbb{F}_{q}$, where $q$ is a power
of  the prime  $p$, is  then  quite straightforward,  but requires  to
consider  $V$   also  as   a  vector  space   over  the   prime  field
$\mathbb{F}_{p}$. The latter structure is completely determined by
the   structure  of   $(V,   +,  0)$   as   an  (elementary)   abelian
group. Therefore,  when we  will be speaking  of a  \emph{subspace} of
$V$, we will mean an  $\mathbb{F}_{q}$-subspace, while we will use the
term  \emph{subgroup} to  refer to  an  $\mathbb{F}_{p}$-subspace. The
related     terminology     is      explained     in     detail     in
Section~\ref{sec:preliminaries}. Readers wishing  to take a quick look
at  the paper are  advised to  think of  the particular  case $q  = p$
throughout.

Block ciphers using algebraic structures other than the field with two
elements  have already  been studied.  For example,  Biham  and Shamir
in~\cite{CGC-cry-book-bihsha1993differential} studied  the security of
DES against a  differential attack when some of  the operations in DES
are        replaced       by       addition        modulo       $2^n$.
In~\cite{CGC-cry-art-patramsun2003luby}  Patel,  Ramzan  and  Sundaram
showed  that Luby-Rackof  Ciphers are  secure against  adaptive chosen
plaintext   and  ciphertext  attacks,   and  have   better  time/space
complexity if considered over the  prime field $\mathbb{F}_p$ for $p >
2$.   Some    result   in   this   direction    are   also   contained
in~\cite{CGC-cry-art-babbomcolmorsco2012}.

This  paper  is  organised  as  follows. After  some  preliminaries  in
Section~\ref{sec:preliminaries},   we   introduce  translation   based
ciphers  over  arbitrary finite  fields  in Section~\ref{sec:tbc}.  In
Section~\ref{sec:prim}      we      prove      first     a      result
(Theorem~\ref{primitivity})  about the  primitivity of  the  group $G$
generated by the round functions of a translation based cipher over an
arbitrary  finite field.  In our  main result  (Theorem~\ref{main}) we
then  show that  this group  $G$ is  actually the  alternating  or the
symmetric group. To prove this,  we follow the scheme arising from the
O'Nan-Scott  classification of  primitive  groups, in  a special  case
dealt   with  by   Li   (Theorem~\ref{Li}),  showing   that  all   the
possibilities except  the alternating or symmetric group  can be ruled
out (Sections~\ref{sec:as}--\ref{sec:wp}).

\section{Preliminaries}      
\label{sec:preliminaries}
Let $G$  be a finite group acting  transitively on a set  $V$. We will
write the action of $g\in G$ on an element $v\in V$ as $v g$.

A   \textit{partition}   $\mathcal{B}$   of   $V$  is   said   to   be
$G$-\textit{invariant}     if     $B g\in\mathcal{B}$,    for     every
$B\in\mathcal{B}$   and  $g\in  G$.   A  partition   $\mathcal{B}$  is
\textit{trivial} if  $\mathcal{B}=\{V\}$ or $\mathcal{B}=\{\{v\}\, |\,
v\in V\}$. A non-trivial  $G$-invariant partition $\mathcal{B}$ of $V$
is called a  \textit{block system} for the action of  $G$ on $V$. Each
$B\in\mathcal{B}$  is  called a  \textit{block  of imprimitivity}.  (A
block of  imprimitivity can be characterised  as a subset  $B$ of $V$,
which is not a singleton  or the whole of $V$, such that  for all $g \in
G$ either $B = B g$, or $B \cap B g = \emptyset$.) We will say that $G$
is  \textit{imprimitive} in its  action on  $V$ if  it admits  a block
system. A useful elementary fact is
\begin{lemma}\label{lemma:block}
A block  of imprimitivity is of  the form $v  H$, for some $v  \in V$,
where  $H$  is  a  proper  subgroup of  $G$  properly  containing  the
stabiliser of $v$  in $G$.\end{lemma}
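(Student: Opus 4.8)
The plan is to construct the subgroup $H$ explicitly as the setwise stabiliser of the block, and then verify the three required properties. So, given a block of imprimitivity $B$, I would fix an arbitrary element $v \in B$ and set
\[
H = \{\, g \in G : B g = B \,\},
\]
which is at once seen to be a subgroup of $G$. The heart of the argument is the identity $B = v H$; the properness statements will then follow by short variations of the same idea.

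First I would establish $B = v H$. The inclusion $v H \subseteq B$ is immediate: for $h \in H$ we have $v h \in B h = B$, since $v \in B$ and $B h = B$. The reverse inclusion $B \subseteq v H$ is the one place where the hypotheses are used in full. Given $w \in B$, transitivity of $G$ supplies a $g \in G$ with $v g = w$; then $w$ lies in both $B$ and $B g$, so $B \cap B g \neq \emptyset$, and the defining disjointness-or-equality property of a block forces $B g = B$, i.e. $g \in H$. Hence $w = v g \in v H$, as required.

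Next I would treat the containments. The inclusion $\mathrm{Stab}_G(v) \subseteq H$ is the same kind of computation: if $g$ fixes $v$, then $v \in B \cap B g$, so $B g = B$ and $g \in H$. Properness of this containment uses that $B$ is not a singleton: picking $w \in B$ with $w \neq v$ and a $g$ with $v g = w$, the block property gives $g \in H$, while $g \notin \mathrm{Stab}_G(v)$ because $v g = w \neq v$. Finally, $H \neq G$ uses that $B \neq V$: were $H = G$, we would have $v g \in B$ for all $g \in G$, whence $V = v G \subseteq B$ by transitivity, contradicting $B \neq V$.

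The argument is entirely elementary and I do not anticipate a genuine obstacle. If anything, the one step deserving care is the reverse inclusion $B \subseteq v H$, since it is precisely there that \emph{both} the transitivity of the action and the characterising property of a block of imprimitivity are indispensable; everything else is bookkeeping around the subgroup $H$.
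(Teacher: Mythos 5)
Your proposal is correct and complete: the construction of $H$ as the setwise stabiliser $\{g \in G : Bg = B\}$, the verification $B = vH$ via transitivity together with the disjoint-or-equal property of a block, and the two properness arguments (using $|B| > 1$ and $B \neq V$ respectively) are all sound. The paper states this lemma as ``a useful elementary fact'' and gives no proof at all, and your argument is precisely the standard one it implicitly relies on, so there is nothing to compare beyond noting that you have supplied the omitted details correctly.
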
 If $G$ is  not imprimitive, then
$G$ is called \textit{primitive}.

In the rest  of the paper $p$ is  a prime, $q=p^f$ is a  power of $p$,
and  $V$ is  a vector  space of  dimension $d$  over the  finite field
$\mathbb{F}_q$.

We will be  regarding $V$ also as a vector space  over the prime field
$\mathbb{F}_p$ of dimension $e = d f$. Since the latter structure is completely determined by
the (elementary) abelian group structure $(V, +, 0)$, we will refer to
$\mathbb{F}_p$-subspaces  of  $V$  as  \emph{subgroups}, and  we  will
reserve  the  term  \emph{subspace}  for  $\mathbb{F}_q$-subspaces  of
$V$.  Similarly,  a  function  on   $V$  is  \emph{linear}  if  it  is
$\mathbb{F}_q$-linear,  and \emph{additive}  if  it is
$\mathbb{F}_p$-linear. 

We 
denote by  $\GL(V)$ the  group of linear  permutations of $V$,  and by
$\GL(V,  +,  0)$  (or  simply  $\GL(V,  +)$)  the  group  of  additive
permutations of  $V$. We denote by $\AGL(e,p)$ the  group of affine
transformations  of a vector space of dimension $e$ over
$\mathbb{F}_{p}$. This is used in Section~\ref{sec:prim} and in
Section~\ref{sec:affine}, where some more related notation is introduced.

We write  $\Sym(V)$ and $\Alt(V)$  respectively for the  symmetric and
the  alternating  group on  the  set  $V$. For  $v  \in  V$, we  write
$\sigma_{v} \in \Sym(V)$ for the translation $x \mapsto x + v$ on $V$,
and  denote   by  $T(V)=\{\sigma_v  \mid   v\in  V\}$  the   group  of
translations  on  $V$. Clearly  $T(V)$  is  a  transitive subgroup  of
$\Sym(V)$.  Because  of   Lemma~\ref{lemma:block},  $T(V)$  is  always
imprimitive unless $f=1$ (that is, $q = p$ is prime) and $d=1$.
\begin{lemma}
If $f > 1$, or $d > 1$,  a block system for $T(V)$ is of
  the form $\{W+v \mid v\in V\}$, for some proper, non-zero
  \emph{subgroup} $W$ of $V$.
\end{lemma} 

We recall  that a  group $G$ of  permutations acting  on a set  $V$ is
called \textit{regular}  if, given $v\in  V$, for each $w\in  V$ there
exists  a \emph{unique}  $g\in  G$ such  that  $v g=w$ (in  particular,
regularity implies transitivity). The  group $T(V)$ of translations is
regular.


\section{Translation based block ciphers over $\mathbb{F}_{q}$}
\label{sec:tbc}

We  consider block  ciphers  defined over  an  arbitrary finite  field
$\mathbb{F}_{q}$.
 
Let $\mathcal{C}$ be a block cipher for which that the plaintext space
$V=(\mathbb{F}_{q})^d$, for some  $d\in\mathbb{N}$, coincides with the
ciphertext space. According to Shannon~\cite{Sha49},
\begin{equation*}\label{eq:Sha}
\mathcal{C} = \{ \tau_k \mid k \in \mathcal{K} \}
\end{equation*}
is a  set of permutations $\tau_k$  of $V$; here  $\mathcal{K}$ is the
key space.

It   would    be   very    interesting   to   determine    the   group
$\Gamma(\mathcal{C}) = \langle \tau_k  \mid k \in \mathcal{K} \rangle \le
\Sym(V)$ generated  by the permutations  $\tau_k$.  Unfortunately, for
many  classical   cases  (e.g.\   AES~\cite{CGC-cry-art-deamenrijmen1},
SERPENT~\cite{CGC-cry-art-serpent}, DES~\cite{CGC-misc-nistDESfips46},
IDEA~\cite{CGC-cry-art-laimassey2006IDEA})  this
appears to be a difficult problem. However, more manageable overgroups
of  $\Gamma$  have been  investigated  (see~\cite{We93, HoWe94,  WE02,
  SW08}), such as the ones that we now define.

Suppose  that  each  element   of  the  cipher  $\mathcal{C}$  is  the
composition  of  $l$  round   functions,  that  is, permutations
$\tau_{k,1},\dots, \tau _{k,l}  $, where each  $\tau_{k,h}$
is determined by a master key $k \in \mathcal{K}$, and the round index
$h$. Define the groups
\begin{equation*}
\Gamma_h(\mathcal{C})   =  \langle  \tau_{k,h}   \mid  k\in\mathcal{K}
\rangle \le \Sym(V),
\end{equation*}
 for each $h$, and the group
\begin{equation*}
\Gamma_\infty(\mathcal{C})  = \langle \Gamma_h(\mathcal{C})  \mid h  = 1,
\dots ,  l\rangle =  \langle \tau_{k,h} \mid  k\in\mathcal{K}, h  = 1,
\dots, l \rangle
\end{equation*}
In  the  literature, ``round''  often  refers  either  to the  ``round
index'' or to the ``round function''.


Consider a direct sum decomposition of $V$
\begin{equation}\label{eq:directsum}
V=V_1\oplus\cdots\oplus V_n
\end{equation}
where  $n  >   1$, and the $V_i$ are subspaces of $V$ with
$\dim_{\mathbb{F}_{q}}(V_i)=m>1$,  for  each 
$i\in\{1,\ldots,n\}$,  so that $d = m n$.  Each $v\in  V$ can then be written
uniquely as $v=v_1 + \cdots +  v_n$, with $v_i\in V_i$.

\begin{definition}
An   element   $\gamma\in\Sym(V)$   is   called   a   \emph{bricklayer
  transformation}  with  respect  to~\eqref{eq:directsum} if  $\gamma$
acts on  an element $v=v_1 + \cdots +  v_n$,  with $v_i\in V_i$,
as
$$ v\gamma=v_1\gamma_1 + \cdots +  v_n\gamma_n,
$$  for  some  $\gamma_i\in\Sym(V_i)$.  Each $\gamma_i$  is  called  a
\emph{brick}.
\end{definition}
\begin{definition}
A linear map $\lambda\in\GL(V)$ is called a \emph{proper mixing layer}
if   no   sum   of   the   $V_i$,   except   $\{0\}$   and   $V$,   is
$\lambda$-invariant.
\end{definition}
Now  we   generalise  the  definition  of   translation  based  cipher
$\mathcal{C}$  (Definition  3.1  in~\cite{CGC-cry-art-carantisalaImp},
when $\mathcal{C}$ is a block cipher over $\mathbb{F}_{q}$.

\begin{definition}\label{deftb}
  A block cipher  $\mathcal{C} = \{ \tau_k \mid k  \in \mathcal{K} \}$ over
  $\mathbb{F}_{q}$ is called \emph{translation based (tb)} if
  \begin{enumerate}
  \item[(1)]  each  $\tau_k$  is  the  composition  of  $l$  round  functions
    $\tau_{k,h}$, for $k \in \mathcal{K}$,  and $h = 1, \dots, l$, where
    in  turn  each round  function  $\tau_{k,h}$  can  be written  as  a
    composition   $\gamma_h\lambda_h\sigma_{\phi(k,    h)}$   of   three
    permutations of $V$, where
    \begin{itemize}
    \item $\gamma_h$  is a bricklayer transformation not  depending on $k$
      and $0\gamma_h=0$,
    \item $\lambda_h$ is a linear permutation not depending on $k$,
    \item $\phi : \mathcal{K}  \times \{1, \dots , l \} \to  V$ is the key
      scheduling function, so that  $\phi(k, h)$ is the $h$-th \emph{round
        key}, given  the master key  $k$; 
    \end{itemize}
  \item[(2)] for at least one round index $h_0$ we have that
    \begin{itemize}
    \item $\lambda_{h_0}$ is a proper mixing layer, and
    \item the map $\mathcal{K} \to V$ given by $k \mapsto \phi(k, h_0)$ is
      surjective,  that is,  every element  of $V$  occurs as  an $h_0$-th
      round key.
    \end{itemize}
  \end{enumerate}
  We  will refer  to a  round that  satisfies condition~(2) as a
  \emph{proper round}.
\end{definition}

We now  work in  the group $\Gamma_h(\mathcal{C})$,  for a  fixed $h$,
omitting   for   simplicity   the   indices  $h$   for   the   various
functions. Write $\rho=\gamma\lambda$.
Note the following
\begin{lemma}\label{lemma:transide}
Suppose that for  a certain $h$, the map $\mathcal{K}  \to V$ given by
$k \mapsto \phi(k, h)$ is surjective. Then
\begin{equation*}
\Gamma_h(\mathcal{C}) = \langle \rho, T(V) \rangle.
\end{equation*} 
\end{lemma}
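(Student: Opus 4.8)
The plan is to establish the two inclusions $\Gamma_h(\mathcal{C}) \subseteq \langle \rho, T(V)\rangle$ and $\langle \rho, T(V)\rangle \subseteq \Gamma_h(\mathcal{C})$ separately. The starting observation, which drives everything, is that each round function factors as $\tau_{k,h} = \rho\,\sigma_{\phi(k,h)}$, since $\rho = \gamma\lambda$ by definition.

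For the first inclusion I would simply note that every generator $\tau_{k,h}$ of $\Gamma_h(\mathcal{C})$ is the product $\rho\,\sigma_{\phi(k,h)}$ of the element $\rho$ with the translation $\sigma_{\phi(k,h)} \in T(V)$, and therefore lies in $\langle \rho, T(V)\rangle$. Since $\Gamma_h(\mathcal{C})$ is generated by these elements, this gives $\Gamma_h(\mathcal{C}) \subseteq \langle \rho, T(V)\rangle$ at once.

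For the reverse inclusion I would exploit the surjectivity hypothesis twice. First, since $k \mapsto \phi(k, h)$ is surjective onto $V$ and $0 \in V$, there is a key $k_0$ with $\phi(k_0, h) = 0$; as $\sigma_0$ is the identity, this yields $\rho = \tau_{k_0, h} \in \Gamma_h(\mathcal{C})$. Second, for an arbitrary $v \in V$, surjectivity provides a key $k_v$ with $\phi(k_v, h) = v$, so that $\rho\,\sigma_v = \tau_{k_v, h} \in \Gamma_h(\mathcal{C})$; multiplying on the left by $\rho^{-1} \in \Gamma_h(\mathcal{C})$ then gives $\sigma_v \in \Gamma_h(\mathcal{C})$. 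Hence $T(V) \subseteq \Gamma_h(\mathcal{C})$ and also $\rho \in \Gamma_h(\mathcal{C})$, whence $\langle \rho, T(V)\rangle \subseteq \Gamma_h(\mathcal{C})$, and the two inclusions together establish the equality.

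There is no genuine obstacle in this argument; it is essentially bookkeeping. The one place where the hypothesis is indispensable is the extraction of $\rho$ itself from the generators, which is exactly where surjectivity, applied to the value $0$, does the work. Without it one would only control the products $\rho\,\sigma_v$ for $v$ in the image of $\phi(\cdot, h)$, and could not in general separate the bricklayer-linear part $\rho$ from the translation part, so the identity would fail.
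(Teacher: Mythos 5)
Your proof is correct and is essentially the paper's own argument: the paper likewise uses surjectivity to write $\Gamma_h(\mathcal{C}) = \langle \rho\sigma_r \mid r \in V\rangle$, extracts $\rho = \rho\sigma_0 \in \Gamma_h(\mathcal{C})$, and then recovers every translation via $\sigma_v = \rho^{-1}(\rho\sigma_v)$. Your version merely spells out explicitly the two inclusions that the paper compresses into ``the statement is clear''.
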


\begin{proof}
By assumption, $\Gamma_h(\mathcal{C}) =  \langle \rho\sigma_r : r\in V
\rangle$. Thus  $\rho =  \rho \sigma_0 \in  \Gamma_h(\mathcal{C})$, so
that all $\sigma_{v} \in  \Gamma_h(\mathcal{C})$, and the statement is
clear.
\end{proof}

\begin{lemma}\label{rem1}
Suppose that for  a certain $h$, the map $\mathcal{K}  \to V$ given by
$k \mapsto \phi(k, h)$ is surjective.

Then if $\Gamma_h(\mathcal{C})$ is  imprimitive on $V$, a block system
$\mathcal{B}$ is of  the form $\{W+v \mid v\in  V\}$, for some proper,
non-trivial subgroup $W$ of $V$.
\end{lemma}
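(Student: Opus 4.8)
The plan is to reduce the statement to the description of the block systems of the translation group $T(V)$ that was recorded just after the definition of $T(V)$. The crucial input is Lemma~\ref{lemma:transide}: under the surjectivity hypothesis we have $T(V) \le \Gamma_h(\mathcal{C})$, so every structural feature of the regular group $T(V)$ is available inside $\Gamma_h(\mathcal{C})$. Since the ambient decomposition~\eqref{eq:directsum} has $n > 1$ and $m > 1$, we always have $d = mn > 1$, so the hypothesis ``$f > 1$ or $d > 1$'' of that earlier lemma is automatically met.

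The steps, in order, are as follows. First I would observe that a block system $\mathcal{B}$ for $\Gamma_h(\mathcal{C})$ is automatically a block system for the subgroup $T(V)$: it is a non-trivial partition of $V$ invariant under all of $\Gamma_h(\mathcal{C})$, hence in particular invariant under $T(V) \le \Gamma_h(\mathcal{C})$, and non-triviality is unaffected by passing to a subgroup. The preceding lemma on $T(V)$ then applies verbatim and yields $\mathcal{B} = \{W + v \mid v \in V\}$ for some proper, non-zero subgroup $W$ of $V$, which is exactly the claim. If instead one wants a self-contained argument, I would let $W$ be the block of $\mathcal{B}$ containing $0$ and show directly that it is a subgroup: for $b \in W$ the translation $\sigma_b \in T(V) \le \Gamma_h(\mathcal{C})$ sends $W$ to the block $W + b$, and since $b \in W \cap (W+b)$, the block dichotomy ($W+b = W$ or $W \cap (W+b) = \emptyset$) forces $W + b = W$. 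Running this over all $b \in W$ shows $W$ is closed under addition and contains $0$, hence is a subgroup of the finite group $(V,+)$; being a subgroup of an elementary abelian $p$-group it is an $\mathbb{F}_p$-subspace, i.e.\ a \emph{subgroup} in the sense fixed in Section~\ref{sec:preliminaries}. The translates $W+v$ then exhaust $\mathcal{B}$ by $T(V)$-invariance, and $W$ is proper and non-trivial because the block $W$ is neither a singleton nor the whole of $V$.

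The argument is essentially routine, so there is no serious obstacle. The one point that needs care is the reduction in the first step: recognising that both invariance and non-triviality of the partition survive restriction from $\Gamma_h(\mathcal{C})$ to $T(V)$, which is what lets us import the already-known classification of $T(V)$-block systems. In the self-contained version the analogous point is the verification that the block through $0$ is genuinely closed under the group operation — obtained from the block dichotomy — rather than merely meeting its own translates.
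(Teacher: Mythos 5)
Your proposal is correct and follows essentially the same route as the paper: Lemma~\ref{lemma:transide} gives $T(V)\le\Gamma_h(\mathcal{C})$ under the surjectivity hypothesis, a block system for $\Gamma_h(\mathcal{C})$ restricts to one for $T(V)$, and the preliminary classification of $T(V)$-block systems (via Lemma~\ref{lemma:block} for the regular group $T(V)$) then yields $\mathcal{B}=\{W+v \mid v\in V\}$ with $W$ a proper, non-trivial subgroup of $V$. Your optional self-contained argument via the block through $0$ merely reproves that preliminary lemma, so it is a correct but not genuinely different variant.
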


\begin{proof}
By  Lemma~\ref{lemma:transide},  $\Gamma_h(\mathcal{C})$ contains  the
group   $T(V)$  of   translations.  If   $\Gamma_h(\mathcal{C})$  acts
imprimitively on $V$, so  does $T(V)$, By Lemma~\ref{lemma:block}, the
block containing $v \in  V$ is of the form $W +  v$, for $W$ a proper,
non-trivial subgroup of $T(V)$.
\end{proof}


\begin{proposition}\label{imp}
Suppose that for  a certain $h$, the map $\mathcal{K}  \to V$ given by
$k \mapsto \phi(k, h)$ is surjective.

Then $\Gamma_h(\mathcal{C})$ is imprimitive if and only if there exists
a proper, non-trivial  subgroup $W$ of $V$ such that  for all $v\in V$
and $u\in W$, we have
$$ (u+v)\gamma-v\gamma\in W\lambda^{-1}.
$$
\end{proposition}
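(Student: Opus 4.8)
The plan is to combine Lemma~\ref{lemma:transide}, which identifies $\Gamma_h(\mathcal{C}) = \langle \rho, T(V)\rangle$, with Lemma~\ref{rem1}, which forces any block system of $\Gamma_h(\mathcal{C})$ to be a coset partition $\mathcal{B}_W = \{W + v \mid v \in V\}$ for some proper, non-trivial subgroup $W$. The first observation I would record is that such a coset partition is automatically invariant under $T(V)$, since a translation sends $W + v$ to $W + (v + w)$. Hence, because invariance under a generating set propagates to the whole group, $\mathcal{B}_W$ is $\Gamma_h(\mathcal{C})$-invariant if and only if it is $\rho$-invariant. This reduces the entire statement to characterising $\rho$-invariance of $\mathcal{B}_W$ in terms of $\gamma$ and $\lambda$.

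The core of the argument is then a transport computation through the linear map $\lambda$. Writing $\rho = \gamma\lambda$, the image of a block is $(W+v)\rho = \bigl((W+v)\gamma\bigr)\lambda$. Since $\lambda$ is additive, a set maps onto a coset of $W$ under $\lambda$ exactly when it is itself a coset of $W\lambda^{-1}$; thus $(W+v)\rho$ is a coset of $W$ if and only if $(W+v)\gamma$ is a coset of the subgroup $W\lambda^{-1}$. As $v\gamma$ lies in $(W+v)\gamma$ (taking $u=0$), this coset can only be $W\lambda^{-1} + v\gamma$, and membership unwinds to exactly the stated inclusion $(u+v)\gamma - v\gamma \in W\lambda^{-1}$ for every $u \in W$.

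Assembling the two directions is then routine. For the forward direction, imprimitivity yields via Lemma~\ref{rem1} a subgroup $W$ and block system $\mathcal{B}_W$; since $\rho$ permutes its blocks, each $(W+v)\rho$ is a coset of $W$, and the computation above delivers the inclusion. For the converse, the inclusion gives $(W+v)\gamma \subseteq W\lambda^{-1} + v\gamma$, which I would upgrade to equality by a cardinality count (all sets in sight have size $|W|$, using that $\gamma$ and $\lambda^{-1}$ are bijections), so that $(W+v)\rho = W + v\gamma\lambda$ is indeed a coset of $W$; together with $T(V)$-invariance and the non-triviality of $W$, this makes $\mathcal{B}_W$ a block system.

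The step I expect to be the main obstacle — or at least the one demanding the most care — is keeping the two subgroups $W$ and $W\lambda^{-1}$ distinct throughout and justifying that $(W+v)\gamma$ is the full coset rather than a proper subset; the cardinality argument settles this cleanly. A smaller point worth stating explicitly is that $\rho$-invariance of $\mathcal{B}_W$ really is equivalent to $\rho$ sending each block into a coset of $W$, which follows because $\rho$ is a bijection permuting finitely many blocks of equal size.
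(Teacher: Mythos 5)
Your proposal is correct and takes essentially the same route as the paper's proof: reduce via Lemma~\ref{lemma:transide} and Lemma~\ref{rem1} to coset partitions $\{W+v \mid v\in V\}$, then transport the condition that $\rho=\gamma\lambda$ maps the block $W+v$ onto the coset containing $v\gamma\lambda$ through the additive map $\lambda$ to obtain $(u+v)\gamma-v\gamma\in W\lambda^{-1}$. The only difference is that you spell out the converse direction explicitly (the cardinality count upgrading the inclusion $(W+v)\gamma\subseteq W\lambda^{-1}+v\gamma$ to equality, and the automatic $T(V)$-invariance of the coset partition), steps the paper's terse ``if and only if'' computation leaves implicit.
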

\begin{proof}
  $\Gamma_h(\mathcal{C})$  is imprimitive if  and only  if there  is a
  block  system of  type  $\{W+v \mid  v\in  V \}$,  for some  proper,
  non-trivial subgroup $W$. Thus we have
  \begin{equation*}
    (W+v)\rho=W+v\gamma\lambda\sigma_0\Longrightarrow
    (W+v)\gamma\lambda=W+v\gamma\lambda,
  \end{equation*}
  for every $v\in V$. Hence, for all $u\in W$ and $v\in V$ we have
  $$ (u+v)\gamma\lambda-v\gamma\lambda \in W,
  $$ so that
  $$ (u+v)\gamma-v\gamma\in W\lambda^{-1}.
  $$
\end{proof}

\section{Primitivity}
\label{sec:prim}

We   generalise  the   definition  of   weak  uniformity   and  strong
anti-invariance,    given     for    vectorial    Boolean    functions
in~\cite{CGC-cry-art-carantisalaImp},       to       any      function
$f\,:\,A\rightarrow A$, where  $A$ is a vector space  of dimension $m$
over a prime  field $\mathbb{F}_{p}$, that is an elementary abelian group of order $p^m$.

Let  $a\in  A$.  For   every  $f\,:\,A\rightarrow  A$,  we  denote  by
$\hat{f}_a$ the function
$$
\begin{array}{rcl}
\hat{f}_a:A&\longrightarrow&A\\ x&\longmapsto&f(x+a)-f(x).
\end{array}
$$ Let $\mathrm{Im}(\hat{f}_a)=\{y\in  A \mid y=\hat{f}_a(x)\mbox{ for
  some }x\in A\}$ be the image of $\hat{f}_a$.

\begin{definition}
For $m\geq  2$ and  $\delta\geq p$, let  $A$ be  a subgroup of  $V$ of
order  $p^m$, and  $f\in \Sym(A)$.  We  say that  $f$ is  \emph{weakly
  $\delta$-uniform} if for every $a\in A\setminus\{0\}$ we have
$$ |\mathrm{Im}(\hat{f}_a)|>\frac{p^{m-1}}{\delta}.
$$
\end{definition}
Recently, weakly 2-uniform functions for  4 bits have been studied and
classified in~\cite{CGC2-cry-art-Font12}.
\begin{remark}\label{rem2}
If a  function $f$ is  weakly $\delta$-uniform, with  $\delta\leq p^r$
for some  $r\in\mathbb{N}$, and $\mathrm{Im}(\hat{f}_a)$  is contained
in a subgroup $W$ of $A$, then $\lvert W \rvert \ge p^{m-r}$.
\end{remark}
\begin{definition}
Let  $A$  be  a  subgroup  of  $V$. We  say  that  $f\in  \Sym(A)$  is
\emph{strongly $r$-anti-invariant}  if for  any two subgroups  $U$ and
$W$  such that  $f(U)=W$, we  have  either $\lvert  U \rvert=\lvert  W
\rvert < p^{m-r}$ or $U=W=A$.
\end{definition}
We prove the main result of this section
\begin{theo}\label{primitivity}
Let $\mathcal{C}$ be a tb cipher over $\mathbb{F}_{q}$.

Suppose that the $h$-th round is proper, and let $1\leq r<\frac{m}{2}$.

If each brick of $\gamma_h$ is
\begin{itemize}
\item[(1)] weakly $p^r$-uniform, and
\item[(2)] strongly $r$-anti-invariant,
\end{itemize}
then    $\Gamma_h(\mathcal{C})$    is    primitive   and    so    also
$\Gamma_\infty(\mathcal{C})$ is primitive.
\end{theo}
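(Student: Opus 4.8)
The plan is to argue by contradiction, converting imprimitivity into a condition on $\gamma=\gamma_h$ via Proposition~\ref{imp} and then showing that, under~(1) and~(2), some partial sum of the $V_i$ is invariant under the mixing layer $\lambda=\lambda_h$, contradicting that the $h$-th round is proper. Since that round is proper, the key map at $h$ is surjective, so Lemma~\ref{lemma:transide} gives $\Gamma_h(\mathcal{C})=\langle\rho,T(V)\rangle$ and Proposition~\ref{imp} applies. Assume $\Gamma_h(\mathcal{C})$ is imprimitive; then there is a proper, non-trivial subgroup $W$ with $(u+v)\gamma-v\gamma\in W\lambda^{-1}$ for all $u\in W$ and $v\in V$. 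Writing $U:=W\lambda^{-1}$ and taking $v=0$ (so that $0\gamma=0$) yields $u\gamma\in U$ for every $u\in W$, hence $W\gamma\subseteq U$; since $\gamma$ and $\lambda$ are bijections, $|W\gamma|=|W|=|U|$, so in fact $W\gamma=U$.

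Next I would project onto the summands. Let $\pi_i\colon V\to V_i$ be the projection and set $W_i=W\pi_i$, $U_i=U\pi_i$, subgroups of $V_i$. Applying $\pi_i$ to $W\gamma=U$ and using the bricklayer structure gives $\gamma_i(W_i)=U_i$, while applying $\pi_i$ to the derivative condition shows $\gamma_i(x+a)-\gamma_i(x)\in U_i$ for all $x\in V_i$ and all $a\in W_i$; in particular the image of $x\mapsto\gamma_i(x+a)-\gamma_i(x)$ lies in $U_i$. If $W_i\ne\{0\}$, weak $p^r$-uniformity together with Remark~\ref{rem2} forces $|U_i|\ge |V_i|\,p^{-r}$; and since $\gamma_i(W_i)=U_i$ is an equality of subgroups, strong $r$-anti-invariance excludes the alternative $|W_i|=|U_i|<|V_i|\,p^{-r}$ and leaves only $W_i=U_i=V_i$. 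Thus for each $i$ either $W_i=U_i=\{0\}$ or $W_i=U_i=V_i$; let $S$ be the set of indices of the latter type. As $W$ is proper and non-trivial, $S$ is a proper, non-empty subset of $\{1,\dots,n\}$, and both $W$ and $U$ lie in $V_S:=\bigoplus_{i\in S}V_i$.

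The step I expect to be the main obstacle is to promote the coordinate equalities $U_i=V_i$ to the genuine containment $V_i\subseteq U$, i.e.\ to rule out that $U$ is merely a ``diagonal'' subgroup surjecting onto each factor. Here I would run a finer version of the previous computation, tracking $K_i:=U\cap V_i$. Fix $i\in S$ and $a\in V_i$; choose $u\in W$ with $u\pi_i=a$ (possible since $W_i=V_i$) and let $v$ range over $V_i$. The components of $(u+v)\gamma-v\gamma$ away from $i$ do not depend on $v$, so subtracting two instances shows that the entire image of $x\mapsto\gamma_i(x+a)-\gamma_i(x)$ lies in a single coset of $K_i$. By weak $p^r$-uniformity this image is large, whence $|K_i|\ge |V_i|\,p^{-r}$; and the coset confinement says exactly that $\gamma_i(x+a)\equiv\gamma_i(x)+\gamma_i(a)\pmod{K_i}$, so that $x\mapsto\gamma_i(x)+K_i$ is a surjective additive homomorphism $V_i\to V_i/K_i$ whose kernel $X_i:=\gamma_i^{-1}(K_i)$ satisfies $\gamma_i(X_i)=K_i$ and $|X_i|=|K_i|$. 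Applying strong $r$-anti-invariance to the subgroup equality $\gamma_i(X_i)=K_i$, and recalling $|K_i|\ge |V_i|\,p^{-r}$, rules out the small case and forces $K_i=V_i$, that is, $V_i\subseteq U$.

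Once $V_i\subseteq U$ for every $i\in S$ I obtain $U=V_S$. Then $W=U\lambda=V_S\lambda$, and since $W\subseteq V_S$ while $\lambda$ is bijective, $V_S\lambda=W\subseteq V_S$ forces $V_S\lambda=V_S$. Thus $V_S$ is a sum of some of the $V_i$, distinct from $\{0\}$ and $V$ because $S$ is proper and non-empty, that is invariant under $\lambda=\lambda_h$ -- contradicting that $\lambda_h$ is a proper mixing layer. Hence $\Gamma_h(\mathcal{C})$ is primitive. Finally $\Gamma_\infty(\mathcal{C})$ is a transitive overgroup of $\Gamma_h(\mathcal{C})$ (it contains $T(V)$), and any block system for $\Gamma_\infty(\mathcal{C})$ would, by $\Gamma_h(\mathcal{C})\le\Gamma_\infty(\mathcal{C})$, be a block system for the primitive group $\Gamma_h(\mathcal{C})$; so $\Gamma_\infty(\mathcal{C})$ is primitive as well.
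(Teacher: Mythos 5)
Your proof is correct, and while it follows the paper's overall strategy---reduce imprimitivity to the derivative condition of Proposition~\ref{imp}, play weak uniformity (via Remark~\ref{rem2}) against strong anti-invariance brick by brick, and finally contradict properness of the mixing layer---it diverges at the decisive step in a way worth recording. The paper (whose $U$ is your $W$) splits into two cases: either $U\cap V_i=V_i$ for every $i$ in the support of $U$, so $U$ is a sum of $V_i$'s and $U\lambda=U$; or some $i$ in the support has $U\cap V_i\neq V_i$, in which case Remark~\ref{rem2} is applied to $\mathrm{Im}(\hat{\gamma}'_u)\subseteq W\cap V_i$ for $u\in U\cap V_i$. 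That application tacitly requires a \emph{nonzero} $u\in U\cap V_i$, which is not automatic: a ``diagonal'' block subgroup can project onto $V_i$ while meeting it trivially, and for such an $i$ the paper's argument as written gives nothing. Your projection-first dichotomy ($W_i\in\{\{0\},V_i\}$), followed by the coset-confinement computation---the off-$i$ components of $(u+v)\gamma-v\gamma$ are constant as $v$ runs over $V_i$, so $\mathrm{Im}(\hat{(\gamma_i)}_a)$ lies in a single coset of $K_i=U\cap V_i$, forcing $\lvert K_i\rvert\ge \lvert V_i\rvert p^{-r}$, and the induced surjective homomorphism $V_i\to V_i/K_i$ makes $\gamma_i^{-1}(K_i)$ a genuine subgroup to which strong anti-invariance applies---closes exactly this subcase and upgrades the conclusion to: the block subgroup is a wall $V_S$ with $V_S\lambda=V_S$. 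So your route costs an extra paragraph but buys completeness where the paper's shorter case split is loose; you also justify the passage to $\Gamma_\infty(\mathcal{C})$, which the paper only asserts. One minor ordering slip: you declare $S$ proper ``as $W$ is proper'' before that is available (a priori a diagonal $W$ could have $W_i=V_i$ for all $i$); no harm results, since once you have $U=V_S$ and $W=V_S$, properness of $W$ yields $V_S\neq V$ exactly where it is needed.
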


\begin{proof}
For the sake of simplicity we drop the round indices.

We suppose,  by way of contradiction,  that $\Gamma_h(\mathcal{C})$ is
imprimitive. By Lemma~\ref{rem1}, the  blocks of imprimitivity are the
cosets of a subgroup of $V$. Let $U$ be a proper, non-trivial subgroup
of $V$ such  that $\{ U+v \mid v\in V \}$ is a block  system for $G$. Since
$\gamma\lambda\sigma_0=\gamma\lambda\in   \Gamma_h(\mathcal{C})$,   we
have $U\gamma\lambda=U+v$, for some $v\in V$. But $0\gamma\lambda=0\in
U+v$, so
\begin{equation}\label{eq1}
U\gamma\lambda=U.
\end{equation}
Let $\pi_i:V\rightarrow V_i$ such that $\pi_i(v)=v_i$ and let $I$ be the
set of all $i$ such that $\pi_i(U)\neq 0$.

If $U\cap V_i=V_i$ for every $i\in I$, then $U=\bigoplus_{i\in I} V_i$
and so, by definition of $\gamma$, $U\gamma=U$. Hence, by (\ref{eq1}),
$U\lambda=U$, contradicting the assumption that $\lambda$ is a
proper mixing layer.

Therefore we can  suppose that there exists $i\in  I$ such that $U\cap
V_i\neq    V_i$.    We    write    $W=U\gamma=U\lambda^{-1}$.    Since
$\Gamma_h(\mathcal{C})$ is imprimitive, by Proposition~\ref{imp} we have
\begin{equation}\label{eq3}
\hat{\gamma}_u(v)\in W
\end{equation}
for every $u\in U$ and $v\in V$.

Moreover, we note that
\begin{equation}\label{eq2}
(U\cap V_i)\gamma_i=W\cap V_i.
\end{equation}
We  denote $\gamma_i$  with  $\gamma'$. By  (\ref{eq3})  we have  that
$\mathrm{Im}(\hat{\gamma}'_u)\subseteq  W\cap V_i$  for all  $u\in U\cap
V_i$.  By  hypothesis  $\gamma'$  is  weakly $p^r$-  uniform,  so,  by
Remark~\ref{rem2}, $\lvert W\cap V_i  \rvert = \lvert U\cap V_i \rvert
\geq  p^{m-r}$.  But,  by   (\ref{eq2}), this contradicts the
assumption that
$\gamma'$ is strongly $r$-anti-invariant.
\end{proof}

We are now able to state our main result.
\begin{theo}\label{main}
Let $d = m n$, with $m,n>1$. Let $\mathcal{C}$ be a tb cipher such that
\begin{itemize}
\item[(1)]     $\mathcal{C}$    satisfies     the     hypothesis    of
  Theorem~\ref{primitivity}, and
\item[(2)]        for        all        $0\ne        a\in        V_i$,
  $\{(x+a)\gamma_i-x\gamma_i\;:\;x\in  V_i\}$  is  not  a coset  of  a
  subgroup of $V_i$.
\end{itemize}
Then the  group $G = \Gamma_{\infty}(\mathcal{C})$  is either $\Alt(V)$
or $\Sym(V)$.
\end{theo}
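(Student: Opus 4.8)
The plan is to exploit the fact, established in Theorem~\ref{primitivity}, that $G = \Gamma_\infty(\mathcal{C})$ is primitive, together with the observation (from Lemma~\ref{lemma:transide}) that $G$ contains the translation group $T(V)$, which is a regular, elementary abelian subgroup of $\Sym(V)$. A primitive group containing an abelian regular subgroup is severely constrained, so I would invoke Li's classification (Theorem~\ref{Li}) to reduce the O'Nan--Scott type of $G$ to a short list: the affine (HA) type, the almost simple (AS) type, the product action / wreath product type, and the groups $\Alt(V)$ and $\Sym(V)$ themselves. Since the last two are exactly the desired conclusion, the whole proof amounts to ruling out the first three, which I would handle in the three Sections~\ref{sec:as}, \ref{sec:affine} and~\ref{sec:wp}. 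The decisive extra input beyond primitivity is hypothesis~(2): it is a strong non-affinity condition on the bricks, asserting that each first-order difference set is genuinely nonlinear.

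First I would dispose of the affine (HA) case. Here $G$ is contained in the affine group $\AGL(e,p)$ determined by its socle $N$, a regular elementary abelian subgroup of order $\lvert V\rvert = p^e$. A priori $N$ need not coincide with $T(V)$, since an affine group can contain several regular elementary abelian subgroups; the heart of this case is therefore to show, using conditions~(1) and~(2), that in fact $N = T(V)$. Once this is known, $\rho = \gamma\lambda$ is affine with respect to the original addition on $V$, and since $\lambda$ is linear, each brick $\gamma_i$ is affine; then for $0 \ne a \in V_i$ the set $\{(x+a)\gamma_i - x\gamma_i : x \in V_i\}$ is a single point, a coset of the trivial subgroup $\{0\}$, contradicting hypothesis~(2). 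This rules out the affine type.

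Next, for the almost simple (AS) case I would appeal to the explicit list, provided by Theorem~\ref{Li}, of almost simple primitive groups admitting an abelian regular subgroup. The degree here is the prime power $\lvert V\rvert = q^d = p^{fmn}$, and the regular subgroup $T(V)$ is elementary abelian; requiring the degree to be a prime power and the regular subgroup to be elementary abelian (rather than, say, cyclic) should eliminate all candidates on the list except the natural actions of $\Alt(n)$ and $\Sym(n)$ on $n = p^e$ points, which are precisely $\Alt(V)$ and $\Sym(V)$ and hence are retained as part of the conclusion. The condition $d = mn$ with $m,n>1$ (so that $e = fmn \ge 4$) is what excludes the small sporadic exceptions of prime-power degree.

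The remaining, and in my view the hardest, case is the product action / wreath product type, treated in Section~\ref{sec:wp}. Here $G$ preserves a product structure $V \cong \Delta^k$ with socle a direct power of a nonabelian simple group acting in product action. The plan is to use Li's description of how an abelian regular subgroup can embed in such a group to transport the product decomposition of $\Delta^k$ back to a nontrivial decomposition of $V$ compatible with $T(V)$, and then to play this decomposition against the cipher structure: the proper mixing layer $\lambda$ forbids nonzero proper invariant sums of the $V_i$, while hypothesis~(2) forbids the bricks from acquiring the coset-type difference behaviour that the product structure would impose. Reconciling Li's combinatorial constraints on the embedding of $T(V)$ with conditions~(1) and~(2) is where the real work lies; this is the step I expect to be the main obstacle, since product action groups are large and the regular abelian subgroup may sit inside them in several a priori possible ways, each of which must be excluded.
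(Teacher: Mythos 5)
Your skeleton is exactly the paper's (primitivity plus the regular abelian subgroup $T(V)$, then Li's Theorem~\ref{Li} and case-by-case exclusion), but in each of the three cases the decisive idea is missing or wrong. In the almost simple case you assert that Theorem~\ref{Li} supplies an explicit list of almost simple groups to be checked; it does not --- case~(3) there is just $S \le G \le \Aut(S)$ with no list. The paper has to import a genuinely new ingredient: Guralnick's classification of nonabelian simple groups having a subgroup of prime-power index $p^{b}$ with $b>3$, which leaves $\Alt(p^{b})$ and $\PSL_\alpha(\beta)$ with $(\beta^{\alpha}-1)/(\beta-1)=p^{b}$, and then a number-theoretic argument (forcing $\alpha=p$, writing $\beta=kp+1$, and comparing divisibility by $p$ on the two sides of the resulting identity) to kill the $\PSL$ possibility. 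Nothing in your sketch produces this; ``elementary abelian regular subgroup plus prime-power degree'' is not by itself a usable filter at this point.

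In the affine case your proposed reduction --- show that the socle $N$ of the affine overgroup equals $T(V)$ --- is not the paper's route and is unsupported: $\AGL(V,\circ,\Theta)$ can contain many regular elementary abelian subgroups besides its translation group, and you give no mechanism by which hypotheses~(1) and~(2) would force $N=T(V)$. The paper never proves this; instead it analyses how $T(V)$ sits inside $\AGL(V,\circ,\Theta)$ for an \emph{a priori different} vector space structure $(V,\circ,\Theta)$: writing $x+y-\Theta = x\kappa(y)\circ y$, it shows $\kappa(V)$ is a unipotent group, picks $y\ne\Theta$ fixed by all $\kappa(x)$, and computes that $\{(x+(y-\Theta))\rho - x\rho : x \in V\}$ is a $+$-coset of a $+$-subgroup; applying $\lambda^{-1}$ and projecting to a suitable component $V_i$ then contradicts hypothesis~(2). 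So the contradiction refutes the affine containment directly, without ever identifying the socle. Finally, in the wreath product case you concede the work is missing, and your guess about which hypotheses do the job is off: the paper builds the decomposition $V = W_1\oplus\cdots\oplus W_c$ with $W_i = 0T_i$ from the factorisation $T = T_1\times\cdots\times T_c$ forced by Li's theorem, uses $G = \langle T,\rho\rangle$ (Lemma~\ref{lemma:transide}) to see that $\rho$ permutes the $W_i$ cyclically, and derives $(v+u)\rho - v\rho \in W_{\iota+1}$ for $u \in W_\iota$; the contradiction then comes either from Proposition~\ref{imp} (imprimitivity, against Theorem~\ref{primitivity}) or from weak $p^{r}$-uniformity together with strong $r$-anti-invariance --- that is, from hypothesis~(1), not hypothesis~(2) as you suggest. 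In short: correct architecture, but all three exclusions lack their actual engines, and the affine reduction you propose would not go through as stated.
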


In~\cite{CGC-cry-art-carantisalaImp} it is shown that the hypothesis of Theorem~\ref{main} are satisfied by well-known ciphers like AES and SERPENT.

We  know  from  Theorem~\ref{primitivity}  that the  subgroup  $G$  of
$\Sym(V)$ is  primitive.  We  are thus able  to apply  the O'Nan-Scott
classification     of      primitive     groups.      Actually,     by
Lemma~\ref{lemma:transide},   $G$  contains   the   group  $T(V)$   of
translations, which acts regularly on $V$.
We  are  then   able  to  appeal  to  a   result  of  Li~\cite[Theorem
  1.1]{CGC-alg-art-li03}  for primitive  groups containing  an abelian
regular subgroup. In  the particular case when the degree  of $G$ is a
power of a prime, this states the following.
\begin{theo}[\cite{CGC-alg-art-li03}, Theorem 1.1]\label{Li}
Let $G$ be a primitive group of degree $p^b$, with $b> 1$. Suppose $G$
contains a regular abelian subgroup $T$.

Then $G$ is one of the following.
\begin{itemize}
\item[(1)] Affine, $G \leq  \AGL(e,p)$, for some prime $p$ and
  $e \ge 1$.
\item[(2)] Wreath product, that is
$$ G \cong (S_1\times\cdots\times S_t).O.P,
$$ with  $p^b=c^t$ for  some $c$ and  $t>1$. Here  $T=T_1\times \cdots
  \times  T_t$,  with  $T_i\leq  S_i$  and  $|T_i|=c$  for  each  $i$,
  $S_1\cong\ldots\cong                   S_t$,                  $O\leq
  \mathrm{Out}(S_1)\times\cdots\times\mathrm{Out}(S_t)$,  $P$ permutes
  transitively the $S_i$, and one of the following holds:
\begin{itemize}
\item[(i)]            $(S_i,T_i)=(\PSL_2(11),\mathbb{Z}_{11})$,
  $(S_i,T_i)=(M_{11},\mathbb{Z}_{11})$,
  $(S_i,T_i)=(M_{23},\mathbb{Z}_{23})$;
\item[(ii)] $S_i=\Sym(c)$  or $\Alt(c)$, and  $T_i$ is
  an abelian group of order $c$.
\end{itemize}
\item[(3)]  Almost  simple, that  is,  $S\leq  G\leq  \Aut(S)$, for  a
  nonabelian simple group $S$.
\end{itemize}
\end{theo}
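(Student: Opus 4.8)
The plan is to apply the O'Nan--Scott classification to the primitive group $G$ and to test each resulting type against the two standing hypotheses: that the degree $p^b$ is a prime power, and that $G$ contains a regular abelian subgroup $T$. Fixing a point $\alpha$, regularity of $T$ amounts to the exact factorisation $G = G_\alpha T$ with $G_\alpha \cap T = 1$, and $\lvert T \rvert = p^b$ makes $T$ an abelian $p$-group. Let $N = \mathrm{soc}(G) = T_1 \times \cdots \times T_k$, with the $T_i$ pairwise isomorphic simple groups. If the $T_i$ are abelian, then $N$ is elementary abelian of order $p^b$, it is the translation group, and $G \le \AGL(e,p)$ with $e = b$; this is conclusion~(1), and no further work is needed.

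Suppose instead that the $T_i$ are nonabelian simple. Here the prime-power degree is decisive. In the simple-diagonal and compound-diagonal types the degree is a power of $\lvert T_1 \rvert$, while in the twisted-wreath type it equals $\lvert N \rvert = \lvert T_1 \rvert^{k}$; in every case the degree is divisible by $\lvert T_1 \rvert$, which by Burnside's theorem (a nonabelian simple group has order divisible by at least three distinct primes) cannot be a power of the single prime $p$. These types are therefore excluded outright, leaving only the almost simple and product-action cases.

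In the product-action type $G$ embeds in $H \wr \Sym(t)$ acting on $\Delta^t$, where $H$ is almost simple and primitive on $\Delta$ and $t > 1$; the degree is $c^t$ with $c = \lvert \Delta \rvert$, so $c = p^a$ is itself a prime power and $p^b = c^t$. Projecting $T$ into the base group and then onto a single coordinate, one finds that each component supports a regular abelian subgroup $T_i$ of order $c$ acting on $\Delta$, which reduces the analysis of the components to the almost simple case in degree $c$. Reassembling the components yields the structure $G \cong (S_1 \times \cdots \times S_t).O.P$ of conclusion~(2), with $P \le \Sym(t)$ transitive on the factors and $O \le \mathrm{Out}(S_1) \times \cdots \times \mathrm{Out}(S_t)$.

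What remains, and this is the crux of the theorem, is to classify the almost simple primitive groups of prime-power degree possessing a regular abelian subgroup; this is where the classification of finite simple groups becomes indispensable, since the factorisation $S \le G = G_\alpha T$ with $T$ abelian must be matched against the known maximal factorisations of almost simple groups. When the degree is genuinely prime the picture collapses: $T \cong \mathbb{Z}_p$ is forced to be cyclic, $G$ is primitive of prime degree, hence by Burnside either affine or $2$-transitive, and among the classified $2$-transitive groups of prime degree the only almost simple ones that are neither $\Alt$ nor $\Sym$ and that carry a regular cyclic subgroup are $(\PSL_2(11),\mathbb{Z}_{11})$, $(M_{11},\mathbb{Z}_{11})$ and $(M_{23},\mathbb{Z}_{23})$, giving list~(2)(i); the natural actions of $\Alt(c)$ and $\Sym(c)$ furnish~(2)(ii), and the remaining almost simple examples of prime-power but non-prime degree form conclusion~(3). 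I expect the main obstacle to be precisely this last step: ruling out, uniformly across the CFSG families, every primitive action of prime-power degree admitting an abelian regular subgroup beyond those listed. This cannot be accomplished by any single uniform argument, but requires a family-by-family passage through the simple groups together with the theory of maximal factorisations.
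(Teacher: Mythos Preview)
The paper does not prove this theorem at all: it is quoted verbatim as Theorem~1.1 of Li~\cite{CGC-alg-art-li03} and used as a black box to organise the case analysis in Sections~\ref{sec:as}--\ref{sec:wp}. There is therefore no proof in the paper to compare your proposal against.

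That said, your sketch is a faithful outline of how Li's argument proceeds (O'Nan--Scott, elimination of diagonal and twisted-wreath types by the prime-power degree constraint, reduction of product action to the almost simple component, and finally the CFSG-dependent classification of almost simple primitive groups of prime-power degree admitting an abelian regular subgroup). Two places where your sketch is looser than the actual proof deserve flagging. First, the assertion that in the product-action case the regular abelian subgroup $T$ decomposes as $T_1 \times \cdots \times T_t$ with each $T_i \le S_i$ regular on the $i$-th coordinate is not automatic from ``projecting $T$ into the base group''; this is a genuine lemma in Li's paper, and your one-line justification skips the real content. Second, your final paragraph correctly identifies the almost simple case as the crux, but note that as the theorem is \emph{stated} here, conclusion~(3) imposes no further restriction on $S$; the CFSG work is needed only to produce the explicit list in~(2)(i)--(ii) for the wreath-product components, not to constrain case~(3) itself.
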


Here the notation  $S.T$ denotes an extension of the  group $S$ by the
group $T$.

In the next three Sections we will examine the three cases of Theorem~\ref{Li}, and show that the only possibilities for our $G$ is
 to be the full alternating  or symmetric group.  (Note that these two
 groups  fall  under  the   almost  simple  case.)   This  will  prove
 Theorem~\ref{main}.

\section{The almost simple case}
\label{sec:as}
In  the almost  simple case  (3),  we note  that $S$  is a  transitive
subgroup  of  the  primitive  group  $G$, so  the  intersection  of  a
one-point stabiliser  in $G$ with $S$  is a proper subgroup  of $S$ of
index $p^{b}$, where $b=f m n$ with  $m,n>1$, i.e. $b>3$. By Theorem 1 and
Section  (3.3) in~\cite{CGC-alg-art-guralnick83}, the  only nonabelian
simple  groups that have  a subgroup  of index  $p^b$ with  $b>3$, are    the    alternating   group    and   the    group
$\PSL_\alpha(\beta)$, where
\begin{itemize}
\item[(i)] $(\beta^\alpha-1)/(\beta-1)=p^{b}$,
\item[(ii)] $\alpha$ is a prime, and
\item[(iii)] $\beta$ is a power  of a prime $\pi$ such that $\pi\equiv
  1\mod \alpha$.
\end{itemize}
If  $S=\Alt(p^{b})$,  since  $\Aut(\Alt(p^{b}))=\Sym(p^{b})$,  then  $G$  is
either $\Alt(p^{b})$ or $\Sym(p^{b})$.

In our case, we can  rule out $S=\PSL_\alpha(\beta)$ as follows. First
we note that by (iii),  we have $\beta^{i}\equiv 1\mod \alpha$, for each
$i$. So
$$
(\beta^\alpha-1)/(\beta-1)=\beta^{\alpha-1}+\beta^{\alpha-2}+\cdots+\beta+1\equiv
\alpha\mod \alpha,
$$                i.e.,                $\alpha$                divides
$\beta^{\alpha-1}+\beta^{\alpha-2}+\cdots+\beta+1$,  and then,  by (i)
we have that $\alpha$  divides $p^b$. Hence $\alpha=p$, since $\alpha$
is  a   prime.   By  (iii)   we  have  $\beta=k p + 1$  for   some  $k\in
2\mathbb{N}$, therefore
$$
\beta^{p-1}+\beta^{p-2}+\cdots+\beta+1=(k p + 1)^{p-1}+(k p +
1)^{p-2}+\cdots+(k p + 1)+1=p^{b}.
$$ So we have
\begin{equation}\label{p^d}
\left(\sum_{j=1}^{p-1}\sum_{i=1}^{p-j}\binom{p-j}{i}k^i p^{i} \right)+p=p^{b}.
\end{equation}
Dividing (\ref{p^d}) by $p$, we obtain
\begin{equation}\label{p^d-1}
\left(\sum_{j=1}^{p-1}\sum_{i=1}^{p-j}\binom{p-j}{i}k^i p^{i-1}\right)+1=p^{b-1}.
\end{equation}
Rewrite (\ref{p^d-1}) as follows
\begin{equation}\label{p^d-1-1}
\left(\sum_{j=1}^{p-1}\sum_{i=2}^{p-j}\binom{p-j}{i}k^i p^{i-1}\right)+\left(\sum_{j=1}^{p-1}\binom{p-j}{1}k\right)=p^{b-1}-1.
\end{equation}
Since
$$
\sum_{j=1}^{p-1}\binom{p-j}{1}k=\sum_{j=1}^{p-1}(p-j)k=\sum_{j=1}^{p-1}
j k=\frac{(p-1)p k}{2},
$$ then the  left side of (\ref{p^d-1-1}) is divisible  by $p$ and the
right side of (\ref{p^d-1-1}) is  not divisible by $p$. So we conclude
that $(\beta^{p}-1)/(\beta-1)=p^{b}$,  is not possible if  $b>1$, which is
our hypothesis.


\section{The affine case}
\label{sec:affine}

As observed  by Li~\cite{CGC-alg-art-li03},  if $V =  (V, +, 0)$  is a
vector  space over  the  field $\mathbb{F}_p$  ,  the symmetric  group
$\Sym(V)$ will contain in general many isomorphic copies of the affine
group. The obvious one, $\AGL(V,  +, 0)$, consists of the maps $x
\mapsto x g + v$, where $g \in \GL(V, +, 0)$, and $v \in V$. But  there
are  in  general  several   structures  $(V,  \circ,  \Theta)$  of  an
$\mathbb{F}_{p}$-vector space  on the set  $V$ (where $\Theta$  is the
neutral element  for $\circ$), each of  which will yield  in general a
different  copy $\AGL(V, \circ,  \Theta)$ of  the affine  group within
$\Sym(V)$, consisting of the maps $x
\mapsto x g + v$, where $g \in \GL(V, \circ, \Theta)$, and $v \in V$.

In    this   section,    we    assume   that    the    group   $G    =
\Gamma_{\infty}(\mathcal{C})$  generated  by  the round  functions  is
contained in  the affine subgroup $\AGL(V,  \circ, \Theta)$ of
$\Sym(V)$  with  respect  to  the elementary abelian group (i.e.,
$\mathbb{F}_{p}$-vector space) structure  $(V,  \circ,
\Theta)$.  Since  by   our  assumptions  the  group  $T   =  T(V)$  of
translations with respect  to $+$ is contained in  $G$, we obtain that
$T$ is an  abelian regular subgroup of $\AGL(V,  \circ, \Theta)$. This
allows us to use the description of~\cite{CGC-cry-art-cardalsal06} for
this kind of subgroups, which we  revisit in the following. This is an
extension      (and      a      correction)      of      the      work
of~\cite{CGC-cry-art-carantisalaImp}  for  characteristic 2.  Although
the approach  of~\cite{CGC-cry-art-cardalsal06} in terms  of rings has
proved  useful in  other  circumstances, in  this  particular case  we
believe  a  treatment without  rings  to  be  preferable in  terms  of
clarity.

So we have that the translations $\sigma_{y} : x \mapsto x + y$ are in
the affine group $\AGL(V, \circ, \Theta)$.  Thus for $x, y \in
V$, if we  consider the translation $\sigma_{y -  \Theta}$, that takes
$\Theta$ to $y$, we have
\begin{equation}\label{eq:plustocircle}
  x + y - \Theta =x \sigma_{y - \Theta} = x \kappa(y) \circ y,
\end{equation}
for some $\kappa(y) \in  \GL(V, \circ, \Theta)$.  A substitution shows
that
\begin{equation}\label{eq:circletoplus}
  x \circ y = x \kappa(y)^{-1} + y - \Theta.
\end{equation}
In   the   following    we   will   be   using~\eqref{eq:plustocircle}
and~\eqref{eq:circletoplus} repeatedly without further mention.

Note that
\begin{equation*}
  \begin{aligned}
    x + y  + z - 2 \Theta &= (x  + y - \Theta) + (z  - \Theta) \\&= (x
    \kappa(y) \circ y) + z - \Theta \\&= x \kappa(y) \kappa(z) \circ y
    \kappa(z) \circ z
  \end{aligned}
\end{equation*}
and also
\begin{equation}\label{eq:ass2}
  \begin{aligned}
    x + y  + z - 2 \Theta  &= x - \Theta +  (y + z - \Theta)  \\&= x -
    \Theta + (y \kappa(z) \circ  z) \\&= x \kappa(y \kappa(z) \circ z)
    \circ y \kappa(z) \circ z
  \end{aligned}
\end{equation}
so that
\begin{equation}\label{eq:gammaproduct}
  \kappa(y+z  - \Theta)  =  \kappa(y \kappa(z)  \circ  z) =  \kappa(y)
  \kappa(z).
\end{equation}
This shows that $\kappa(V) =\{ \kappa(y)  \mid y \in V\}$ is a group, and
$y \mapsto \kappa(y+\Theta)$ is an epimorphism $(V, +) \to \kappa(V)$,
so that $\kappa(V)$ is a  $p$-group, and thus acts unipotently on $(V,
\circ)$. Note that we have $\kappa(\Theta) = \mathbf{1}$ (set $y = z =
\Theta$ in~\eqref{eq:gammaproduct}), and
\begin{equation}\label{eq:inverse}
  \kappa(y)^{-1} = \kappa(-y + 2 \Theta)
\end{equation}
(set $z = - y + 2 \Theta$ in~\eqref{eq:gammaproduct}).

Now fix $y \in V$, $y \ne \Theta$, such that $y \kappa(x) = y$ for all
$x \in V$. (Since the  group $\kappa(V)$ is unipotent on $(V, \circ)$,
we have $\{y \in  V \mid y \kappa(x) = y \text{ for  all $x \in V$}\} \ne
\{\Theta\}$.)

Note that $\rho$ is $\circ$-affine, so there is a constant $\eta$ such
that $s  \mapsto s \rho  \circ \eta$ is $\circ$-additive.   It follows
that $ (s  \circ t) \rho \circ \eta  = s \rho \circ \eta  \circ t \rho
\circ \eta, $ so that
\begin{equation*}
(s \circ t) \rho = s \rho \circ t \rho \circ \eta
\end{equation*}
for all $s, t \in V$. We use this to compute, for the given $y$ and an
arbitrary $x \in V$, 
\begin{equation}\label{eq:diff}
  \begin{aligned}
    (x + (y - \Theta)) \rho - x \rho &= (y + x - \Theta) \rho - x \rho
    \\&= (y \kappa(x) \circ x) \rho - x \rho \\&= (y \circ x) \rho - x
    \rho \\&=  y \rho \circ x  \rho \circ \eta  - x \rho \\&=  (y \rho
    \circ \eta  ) \circ  x \rho  - x \rho  \\&= (y  \rho \circ  \eta )
    \kappa(x \rho)^{-1} + x \rho -  \Theta - x \rho \\&= (y \rho \circ
    \eta ) \kappa(x \rho)^{-1} - \Theta.
  \end{aligned}
\end{equation}
Write
\begin{equation*}\label{eq:not}
  u = y \rho \circ \eta, \qquad v = - x \rho + 2 \Theta,
\end{equation*}
so       that       $\kappa(v)       =      \kappa(x       \rho)^{-1}$
by~\eqref{eq:inverse}. Now~\eqref{eq:diff} becomes
\begin{equation}\label{eq:morediff}
  (x + (y - \Theta)) \rho - x \rho = u \kappa(v) - \Theta.
\end{equation}

Write $\kappa(v) = \textbf{1} \circ \delta(v)$, that is,
\begin{equation*}
  u \kappa(v) = u \circ u \delta(v)
\end{equation*}
for $u \in V$, where $\delta : V \to \mathrm{End}(V, \circ)$. Then, as
shown in~\cite{CGC-cry-art-cardalsal06}, $\delta$ is $\circ$-additive,
that   is,   $\delta(v_{1}   \circ   v_{2})  =   \delta(v_{1})   \circ
\delta(v_{2})$   for    $v_{1},   v_{2}   \in    V$.   (This   follows
from~\eqref{eq:ass2}, since interchanging $y$  and $z$, and setting $x
= \Theta$, we  obtain $y \kappa(z) \circ z = z  \kappa(y) \circ y$, so
that $y \delta(z) = z \delta(y)$. Now the left-hand side of the latter
equation is  $\circ$-additive in  $y$, and thus  so is  the right-hand
side.)

From the $\circ$-additivity of $\delta$ it follows that
\begin{equation}\label{eq:W}
  W = u \delta(V) = \{ u \delta(v) \mid v \in V \}
\end{equation}
is a $\circ$-subgroup of $V$, and then
\begin{equation*}
  \{ u \kappa(v) \mid v \in V \} =  \{ u \circ u \delta(v) \mid v \in V \} =
  u \circ u \delta(V) = u \circ W
\end{equation*}
is a $\circ$-coset of the $\circ$-subgroup $W$.

We want to show next that  $W$ is invariant under $\kappa(V)$.  For $z
\in V$ we have
\begin{equation}\label{eq:Wisgammainvariant}
  u \delta(v) \kappa(z) = u \delta(v) \circ u \delta(v) \delta(z).
\end{equation}
The first summand  $u \delta(v)$ in the right-hand side  is in $W$; we
want   to  prove   that  also   $   u  \delta(v)   \delta(z)$  is   in
$W$. Now~\eqref{eq:gammaproduct} implies, for $v, z \in V$,
\begin{equation*}
  \delta(v + z  - \Theta) = \delta(v) \circ  \delta(z) \circ \delta(v)
  \delta(z),
\end{equation*}
so that
\begin{equation*}
  \delta(v)  \delta(z)  =  \delta(v  +  z  -  \Theta)  \circ  (\ominus
  \delta(v)) \circ (\ominus \delta(z)),
\end{equation*}
where $\ominus t$  is the opposite of $t$ with  respect to $\circ$. It
follows   that  also   $u  \delta(v)   \delta(z)  \in   W$,   so  that
by~\eqref{eq:Wisgammainvariant}  $u \delta(v)  \kappa(z) \in  W$, that
is, $W$ is $\kappa(V)$-invariant.

From this it follows that $W - \Theta$ is a $+$-subgroup of $V$, as
\begin{equation*}
  (u  \delta(v_{1})  -  \Theta)  +  (u \delta(v_{2})  -  \Theta)  =  u
  \delta(v_{1}) \kappa(u \delta(v_{2})) \circ u \delta(v_{2}) - \Theta
  \in W - \Theta.
\end{equation*}
Now the right-hand side of~\eqref{eq:morediff} reads
\begin{equation}\label{eq:theendisnigh}
  \begin{aligned}
    u  \kappa(v) -  \Theta &=  u  \circ u  \delta(v) -  \Theta \\&=  u
    \delta(v) \circ u - \Theta \\&= u \delta(v) \kappa(u)^{-1} + u - 2
    \Theta.
  \end{aligned}
\end{equation}
So if we take a fixed value of $y$ (and thus of $u$), as chosen above,
and  let   $x$  (and   thus  $v$)  range   in  $V$, we obtain
from~\eqref{eq:morediff} and~\eqref{eq:theendisnigh} that
the set
\begin{equation}\label{coset}
\{(x+(y-\Theta))\rho-x\rho\;:\; x\in V\}
=
  W \kappa(u)^{-1} + u - 2 \Theta = (u - \Theta) + (W - \Theta)
\end{equation}
is a $+$-coset with respect to $u - \Theta$ of the $+$-subgroup $W-
\Theta$ of $V$.

Now  $\lambda$  is additive  with  respect to  $+$,  so,  if we  apply
$\lambda^{-1}$ to~\eqref{coset}, we obtain that
\begin{equation}\label{coset1}
  \{(x+(y-\Theta))\gamma-x\gamma\;:\; x\in V\}
\end{equation}
is  also a  $+$-coset 
of the
$+$-subgroup $W- \Theta$ of $V$.

Now we can choose an index $i$ such that the component $y_i\in V_i$ of
$y  \ne \Theta$  is different  from $\Theta$.  This is  because either
$\Theta =  0$, and then $y \ne  0$ must have a  non-zero component; or
$\Theta \ne  0$, and  then $\Theta$ can  only be  in at most  a single
component  $V_{i_{0}}$,  so in  case  it  suffices  to choose  $i  \ne
i_{0}$. With this choice, we have that the projection
\begin{equation*}
  \{(x+(y_i-\Theta))\gamma_i-x\gamma_i\;:\; x\in V_i\}
\end{equation*}
of the  set (\ref{coset1}) on  $V_i$ is a  $+$-coset of a  subgroup of
$V_i$  with  respect  to $+$  and  so  we  obtain a  contradiction  to
assumption  (2)  of  Theorem~\ref{main}.   

\section{The wreath product case}
\label{sec:wp}
Let $G=\Gamma_{\infty}(\mathcal{C})$ be the wreath product in product action as follows
$$ G=(S_{1}\times\cdots\times S_{c}).O.P,
$$  with   $p^{b}=z^{c}$  for  some  $z$  and   $c>1$.  Here  $T=T_{1}\times
\cdots\times  T_{c}$, with  $T_{i}\leq S_{i}$  and $|T_{i}|=z$  for  each $i$,
$S_{1}\cong\ldots\cong                    S_{c}$,                   $O\leq
\mathrm{Out}(S_{1})\times\cdots\times\mathrm{Out}(S_{c})$,   $P$  permutes
transitively   the   $S_{i}$'s   by   conjugation.   It   follows   that
$S_{1}\times\cdots\times S_{c}=\mathrm{Soc}(G)$.

By Lemma~\ref{lemma:transide}, $G=\langle  T,\rho\rangle$, and $T\leq  \mathrm{Soc}(G)$, so that
$G/\mathrm{Soc}(G)$ is cyclic, spanned  by $\rho$. Moreover, since $P$
permutes transitively  the $S_i$, then $\rho$  permutes cyclically the
$S_{i}$   by  conjugation.   So  we   have,  possibly reordering indices,
$S_{i}^{\rho}=\rho^{-1}S_i\rho=S_{i+1}$   for   each   $i\neq   c$   and
$S_c^{\rho}=S_1$.

Since each  $T_{i}$ is a group of  translations, then $W_{i}=0 T_{i}\subseteq
S_{i}$ is a subgroup  of $V$ of order $z$. But also  $0 S_{i}$ has order $z$,
so  $0 T_{i}=0 S_{i}$ for  each $i$.  Each element  $v$ of  $V$  can be written
uniquely as
$$ 
v=0 t_{1}t_{2}\cdots t_{c}=0 t_{1}+\cdots+0 t_{c}
$$ 
where $t=t_{1}t_{2}\cdots t_{c}$ for unique $t_{i}\in T_{i}$ and so
$$ 
V=W_{1}\oplus\cdots\oplus W_{c}.
$$

For    each    $i$,   $W_{i}\rho=0 S_{i}\rho=0 S_{i+1}^{\rho^{-1}}\rho=0\rho
S_{i+1}=0 S_{i+1}=W_{i+1}$,  since  $0\rho=0$.  Hence  $\rho$  permutes
cyclically the $W_{i}$.  Write $v\in V$ as $v = w_{1} + \cdots + w_{c}$,  with  $w_{i} \in  W_{i}$, and $w_{i} = 0 t_{i}$  for  some  $t_{i} \in T_{i}$. So we have
$$ 
 v\rho=(0 t_{1}+\cdots+ 0 t_{c})\rho=0 t_{1}\cdots t_{c}\rho=0 t_{1}^{\rho}\cdots
t_{c}^{\rho}
$$ 
as the $t_{i}$ are  translations and $0\rho^{-1}=0$. Since $t_{i}^\rho\in
S_i^{\rho}=S_{i+1}$,  there  exist  $t'_{i+1}\in  T_{i+1}$  such  that
$0 t_{i}^\rho=0 t_{i}\rho=0t'_{i+1}\in  W_{i+1}$ (with indices  taken modulo
c), and because $S_{i}$ and $S_{j}$ commute elementwise, we have
\begin{align*}
v   \rho   &   =   0 t_{1}^\rho\cdots  t_{c}^\rho   =   0 t'_{2}t_{2}^\rho\cdots
t_{c}^\rho=0 t_{2}^\rho  t'_{2}\cdots  t_{c}^\rho\\   &  =  0 t'_{3}  t'_{2}  \cdots
t_{c}^\rho  =  0 t'_{2}  t'_{3}\cdots  t_{c}^\rho  =  \ldots\\  &  =0 t'_{1}  t'_{2}
t'_{3}\cdots  t'_{c}= 0 t'_{1}+\cdots+0 t'_{c}\\  &=  0 t_{c}\rho+ 0 t_{1}\rho+\cdots+
0 t_{c-1}\rho\\ &=w_{1}\rho+\cdots+ w_{c}\rho.
\end{align*}
Now we fix an index $\iota$ and we take $u\in W_\iota$. We have
$$ 
v\rho=(w_{1}+\cdots+w_{c})\rho=w_{1}\rho+\cdots+w_{c}\rho
$$ 
where $w_{\iota} \rho \in W_{\iota+1}$. We also have
$$ 
(v+u)\rho=w_{1}\rho+\cdots+(w_\iota+u)\rho+\cdots+w_{c}\rho
$$ 
with $(w_\iota+u)\rho\in W_{\iota+1}$. It follows that
\begin{equation}\label{wi}
(v+u)\rho-v\rho=(w_\iota+u)\rho-w_\iota\rho\in W_{\iota+1}.
\end{equation}
We  recall  that  $\rho=\gamma\lambda$,  with  $\lambda$  additive.  So,
applying $\lambda^{-1}$ to both sides of (\ref{wi}), we obtain
\begin{equation}\label{wi1}
\hat{\gamma}_u(v)=(v+u)\gamma-v\gamma\in W_{\iota+1}\lambda^{-1}.
\end{equation}

Let $\pi_{j}:V\rightarrow V_{j}$ such that $\pi_{j}(v)=v_{j}$ and let $J$ be the set of all $j$ such that $\pi_{j}(W_{\iota})\neq 0$. Now we have two cases. 
\begin{itemize}
\item[(I)]  If $W_{\iota}\cap V_{j}=V_{j}$ for every $j\in J$, then $W_\iota=\bigoplus_{j\in J}V_j$ ,  so $W_\iota\gamma=W_\iota$.  Since  $W_\iota\rho=W_{\iota+1}$, we  have
  $W_\iota=W_\iota\gamma=W_{\iota+1}\lambda^{-1}$     and    so,    by
  (\ref{wi1}), $(v+u)\gamma-v\gamma\in W_\iota$,  for all $v\in V$ and
  $u\in  W_\iota$.  By Proposition~\ref{imp},  it  follows  that $G$  is
  imprimitive, but this contradicts Theorem~\ref{primitivity}.
\item[(II)] Otherwise, there exist  $j$ such that $W_\iota\cap V_j\neq
  V_j$. We denote  $U=W_\iota$ and $U'=W_{\iota+1}\lambda^{-1}$ and we
  note that
\begin{equation}\label{wi2}
(U\cap V_j)\gamma_j=U'\cap V_j.
\end{equation}
We  denote $\gamma_{j}$  with $\gamma'$.  By (\ref{wi1}),  we  have that
$\mathrm{Im}(\hat{\gamma}'_u)\subseteq U'\cap  V_j$ for all  $u\in U\cap
V_j$.  By  assumption  $\gamma'$  is  weakly $p^r$-  uniform,  so,  by
Remark~\ref{rem2},                          $\lvert U'\cap
V_j\rvert=\lvert U\cap  V_j\rvert \geq  p^{m-r}$.  But this contradicts (\ref{wi2}), since $\gamma'$ is strongly $r$-anti-invariant.
\end{itemize}

\section*{Acknowledgements}

The  authors   are  grateful  to  the  referees   for  several  useful
suggestions. We  are particularly indebted  to one of the  referees for
pointing out  an   inconsistency  in  a  previous  version, and  for
suggesting the way to fix it.

\bibliographystyle{amsalpha} 


\bibliography{biblio}

\end{document}